\newcommand{\C}{\mathbb{C}}
\newcommand{\kk}{\Bbbk}
\def\wt{\operatorname{wt}}
\def\sgn{\operatorname{sgn}}
\def\supp{\operatorname{Supp}}
\def\N{\mathbb{N}}
\def\im{\operatorname{im}}
\def\codim{\operatorname{codim}}
\def\Tr{\operatorname{Tr}}
\newtheorem{Lemma}{Lemma}
\newtheorem{Theorem}[Lemma]{Theorem}
\newtheorem{Corollary}[Lemma]{Corollary}
\newtheorem{Prop}[Lemma]{Proposition}
\newtheorem*{Corollary of Conjecture}{Corollary of Conjecture}
\theoremstyle{definition}
\theoremstyle{remark}
  \newtheorem{rem}[Lemma]{Remark}
\newtheoremstyle{Acknowledgments}
  {}
    {}
     {}
     {}
    {\bfseries}
    {}
     {.5em}
     {\thmname{#1}\thmnumber{ }\thmnote{ (#3)}}
\theoremstyle{Acknowledgments}
\title{Modular Covariants of Cyclic Groups of order $p$}
\author{Jonathan Elmer}
\address{Middlesex University\\
The Burroughs, London\\
NW4 4BT}
\email{j.elmer@mdx.ac.uk}
\date{\today}
\subjclass[2010]{13A50}
\keywords{modular invariant theory, covariants, free module, Cohen-Macaulay, Hilbert series}
\begin{document}

\maketitle
\begin{abstract} Let $G$ be a cyclic group of order $p$ and let $V, W$ be $\kk G$-modules. We study the modules of covariants $\kk[V,W]^G = (S(V^*) \otimes W)^G$. For $V$ indecomposable with dimension 2, and $W$ an arbitrary indecomposable module, we show $\kk[V,W]^G$ is a free $\kk[V]^G$-module (recovering a result of Broer and Chuai \cite{BroerChuaiRelative}) and we give an explicit set of covariants generating $\kk[V,W]^G$ freely over $\kk[V]^G$. For $V$ indecomposable with dimension 3, and $W$ an arbitrary indecomposable module, we show that $\kk[V,W]^G$ is a Cohen-Macaulay $\kk[V]^G$-module (again recovering a result of Broer and Chuai) and we give an explicit set of covariants which generate $\kk[V,W]^G$ freely over a homogeneous system of parameters for $\kk[V]^G$. We also use our results to compute a minimal generating set for the transfer ideal of $\kk[V]^G$ over a homogeneous sytem of parameters when $V$ has dimension 3.
\end{abstract}

\section{Introduction}
Let $G$ be a group, $\kk$ a field, and $V$ and $W$ finite-dimensional $\kk G$-modules on which $G$ acts linearly. Then $G$ acts on the set of functions $V \rightarrow W$ according to the formula

\[g \cdot \phi(v) = g \phi(g^{-1} v)\] for all $g \in G$ and $v \in V$.

Classically, a \emph{covariant} is a $G$-equivariant polynomial map $V \rightarrow W$. An \emph{invariant} is the name given to a covariant $V \rightarrow \kk$ where $\kk$ denotes the trivial indecomposable $\kk G$-module. If the field $\kk$ is infinite, then the set of polynomial maps $V \rightarrow W$ can be identified with $S(V^*) \otimes W$, where the action on the tensor product is diagonal and the action on $S(V^*)$ is the natural extension of the action on $V^*$ by algebra automorphisms. Then the natural pairing $S(V^*) \times S(V^*) \rightarrow S(V^*)$ is compatible with the action of $G$, and makes the invariants $S(V^*)^G$ a $\kk$-algebra, and the covariants $(S(V^*) \otimes W)^G$ a $S(V^*)^G$-module.

If $G$ is finite and the characteristic of $\kk$ does not divide $|G|$, then Schur's lemma implies that every covariant restricts to an isomorphism of some direct summand of $S(V^*)$ onto $W$. Thus, covariants can be viewed as ``copies'' of $W$ inside $S(V^*)$. Otherwise, the situation is more complicated.

The algebra of polynomial maps $V \rightarrow \kk$ is usually written as $\kk[V]$. In this article we will write $\kk[V]^G$ for the algebra of $G$-invariants, and $\kk[V,W]^G$ for the module of covariants. We are interested in the structure of $\kk[V,W]^G$ as a $\kk[V]^G$-module. Throughout, $G$ denotes a finite group.

This question has been considered by a number of authors over the years. For example, Chevalley and Sheppard-Todd \cite{Chevalley}, \cite{SheppardTodd} showed that if the characteristic of $\kk$ does not divide $|G|$ and $G$ acts as a reflection group on $V$, then $\kk[V]^G$ is a polynomial algebra and $\kk[V,W]^G$ is free. More generally, Eagon and Hochster \cite{HochsterEagon} showed that if the characteristic of $\kk$ does not divide $|G|$ then $\kk[V,W]^G$ is a Cohen-Macaulay module (and $\kk[V]^G$ a Cohen-Macaulay ring in particular). In the modular case, Hartmann \cite{Hartmann} and Hartmann-Shepler \cite{HartmannShepler} gave necessary and sufficient conditions for a set of covariants to generate $\kk[V,W]^G$ as a free $\kk[V]^G$-module, provided that $\kk[V]^G$ is polynomial and $W \cong V^*$. Broer and Chuai \cite{BroerChuaiRelative} remove the restrictions on both $W$ and $\kk[V]^G$.

The present article is inspired by two particular results from \cite{BroerChuaiRelative}, which we state here for convenience:

\begin{Prop}[\cite{BroerChuaiRelative}, Proposition 6]\label{broerchuai}  Let $G$ be a finite group of order divisible by $p =$ char$(\kk)$ and let $V,W$ be $\kk G$-modules. 
\begin{enumerate}
\item[(i)] Suppose $\codim(V^G) = 1$. Then $\kk[V]^G$ is a polynomial algebra and $\kk[V,W]^G$ is free as a graded module over $\kk[V]^G$.

\item[(ii)] Suppose $\codim(V^G) = 2$. Then $\kk[V,W]^G$ is a Cohen-Macaulay graded module
over $\kk[V]^G$ .
\end{enumerate}
\end{Prop}

In the situation of (i) above, there is a method for checking a set of covariants generates $\kk[V,W]^G$ over $\kk[V]^G$, but no method of constructing generators. Meanwhile, in the situation of (ii), there exists a polynomial subalgebra $A$ of $\kk[V]^G$ over which $\kk[V,W]^G$ is a free module. It is not clear how to find module generators, or to check that they generate $\kk[V,W]^G$. 

The purpose of this article is to work towards making these results constructive. We investigate certain modules of covariants for $V$ satisfying (i) or (ii) above and $G$ a cyclic group of order $p$.

\section{Preliminaries}\label{sec:prelim}

From this point onwards we let $G$ be a cyclic group of order $p$ and $\kk$ a field of characteristic $p$. Let $V$ and $W$ be $\kk G$-modules. We fix a generator $\sigma$ of $G$. Recall that, up to isomorphism, there are exactly $p$ indecomposable $\kk G$-modules $V_1, V_2, \ldots, V_p$, where the dimension of $V_i$ is $i$ and each has fixed-point space of dimension 1.  The isomorphism class of $V_i$ is usually represented by a module of column vectors on which $\sigma$ acts as left-multiplication by a single Jordan block of size $i$. 

Suppose $W \cong V_n$. It is convenient to choose a basis  $w_1, w_2, \ldots, w_{n}$ of $W$ for which the action of $G$ is given by

\begin{align*} \sigma w_1 &=w_1\\
\sigma w_2&=w_2-w_{1}\\
\sigma w_3 &= w_2-w_2+w_1\\
\vdots & \\
\sigma w_n &=w_n-w_{n-1}+w_{n-2}-\ldots \pm w_1. \end{align*}

(thus, the action of $\sigma^{-1}$ is given by left-multiplication by a upper-triangular Jordan block).
We do not (yet) choose a particular action on a basis for $V$, nor do we assume $V$ is indecomposable; we let $v_1,v_2, \ldots,v_m$ be a basis of $V$ and let $x_1, \ldots, x_m$ be the dual of this basis.

Note that $\kk[V] = \kk[x_1,x_2, \ldots, x_m]$, and a general element of $\kk[V, W]$ is given by
\[\phi = f_1w_1+f_2w_2+ \ldots+ f_nw_n\] where each $f_i \in \kk[V]$. We define the \textbf{support} of $\phi$ by
\[\supp(\phi) = \{i: f_i \neq 0\}.\]

The operator $\Delta = \sigma-1 \in \kk G$ will play a major role in our exposition. $\Delta$ is a \emph{$\sigma$-twisted derivation} on $\kk[V]$; that is, it satisfies the formula

\begin{equation}\label{derivation} \Delta(fg) = f\Delta(g)+\Delta(f)\sigma(g)\end{equation} for all $f, g \in \kk[V]$.  

Further, using induction and the fact that $\sigma$ and $\Delta$ commute, one can show $\Delta$ satisfies a Leibniz-type rule

\begin{equation}\label{Leibniz}
\Delta^k(fg) = \sum_{i=0}^k \begin{pmatrix} k\\i \end{pmatrix} \Delta^i( f) \sigma^{k-i}( \Delta^{k-i} (g)).
\end{equation}

A further result, which can be deduced from the above and proved by induction is the rule for differentiating powers:

\begin{equation}\label{powerrule}
\Delta(f^k) = \Delta(f)\left( \sum_{i=0}^{k-1} f^i \sigma(f)^{k-1-i} \right)
\end{equation}
for any $k \geq 1$.

For any $f \in \kk[V]$ we define the \textbf{weight} of $f$:
\[\wt(f) = \min\{i>0: \Delta^i(f) = 0\}.\]
Notice that $\Delta^{\wt(f)-1}(f) \in \ker(\Delta) = \kk[V]^G$ for all $f \in \kk[V]$. 
Another consequence of  (\ref{Leibniz}) is the following: let $f, g \in \kk[V]$ and set $d = \wt(f), e = \wt(g)$. Suppose that
\[d+e-1 \leq p.\] Then
\[\Delta^{d+e-1}(fg) = \sum_{i=0}^{d+e-1}\begin{pmatrix} d+e-1\\i \end{pmatrix}\Delta^i(f)\sigma^{d+e-1-i}(\Delta^{d+e-1-i}(g)) = 0\]
since if $i<e$ then $d+e-1-i>d-1$. On the other hand
\begin{align*}
\Delta^{d+e-2}(fg) &= \sum_{i=0}^{d+e-2}\begin{pmatrix} d+e-2\\i \end{pmatrix}\Delta^i(f)\sigma^{d+e-2-i}(\Delta^{d+e-2-i}(g)) \\
 &= \begin{pmatrix} d+e-2\\i \end{pmatrix}\Delta^{d-1}(f)\sigma^{e-1}(\Delta^{e-1}(g)) \neq 0 \end{align*}
since $\begin{pmatrix} d+e-2\\i \end{pmatrix} \neq 0 \mod p$. We obtain the followng:

\begin{Prop}\label{weightproduct} Let $f, g \in \kk[V]$ with $\wt(f)+\wt(g)-1 \leq p$. Then $\wt(fg) = \wt(f)+\wt(g)-1$.
\end{Prop}

Also note that
\[\Delta^p = \sigma^p-1=0\] which shows that $\wt(f) \leq p$ for all $f \in \kk[V]^G$. Finally notice that
\begin{equation}\label{trans} \Delta^{p-1} = \sum_{i=0}^{p-1} \sigma^i.\end{equation}
This is the {\it Transfer map}, a $\kk[V]^G$-homomorphism $\Tr^G: \kk[V] \rightarrow \kk[V]^G$ which is well-known to invariant theorists.  

Now we have a crucial observation concerning the action of $\sigma$ on $W$: for all $i=1, \ldots, n-1$ we have
\begin{equation}\label{crucial}
\Delta(w_{i+1})+\sigma(w_i)=0
\end{equation}
and $\Delta(w_1)=0$.

From this we obtain a simple characterisation of covariants:
\begin{Prop}\label{charcov} Let 
\[\phi =  f_1w_1+f_2w_2+ \ldots+ f_nw_n.\]

Then $\phi \in \kk[V,W]^G$ if and only if there exists $f \in \kk[V]$ with weight $\leq n$ such that $f_i = \Delta^{i-1}(f)$ for all $i=1, \ldots, n$.
\end{Prop}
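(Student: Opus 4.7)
The plan is to exploit the fact that, since $G = \langle \sigma \rangle$ and $\Delta = \sigma - 1$, a tensor $\phi \in \kk[V] \otimes W$ lies in $\kk[V]^G(W)$ if and only if $\Delta(\phi) = 0$. So I would apply $\Delta$ term-by-term to $\phi = \sum_i f_i w_i$ using the $\sigma$-twisted derivation rule (\ref{derivation}), which gives
\[\Delta(\phi) = \sum_{i=1}^n f_i \Delta(w_i) + \sum_{i=1}^n \Delta(f_i)\,\sigma(w_i).\]

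Next I would substitute the crucial identity (\ref{crucial}): $\Delta(w_1) = 0$ and $\Delta(w_i) = -\sigma(w_{i-1})$ for $2 \le i \le n$. After re-indexing the first sum and collecting everything in the basis $\sigma(w_1), \ldots, \sigma(w_n)$ of $W$ (which is a basis because $\sigma$ is invertible on $W$), this should simplify to
\[\Delta(\phi) = \sum_{i=1}^{n-1} \bigl(\Delta(f_i) - f_{i+1}\bigr)\,\sigma(w_i) + \Delta(f_n)\,\sigma(w_n).\]
Setting $\Delta(\phi) = 0$ and comparing coefficients forces the recursion $f_{i+1} = \Delta(f_i)$ for $1 \le i \le n-1$, together with $\Delta(f_n) = 0$.

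To conclude, I would set $f := f_1$ and iterate the recursion to obtain $f_i = \Delta^{i-1}(f)$ for each $i$; the condition $\Delta(f_n) = 0$ then reads $\Delta^n(f) = 0$, giving $\wt(f) \le n$. There is no real obstacle here beyond index bookkeeping: the only point to verify carefully is the index shift coming from (\ref{crucial}) when re-expressing the first sum, and the fact that $\{\sigma(w_i)\}$ is a basis of $W$, which is immediate from the invertibility of $\sigma$.
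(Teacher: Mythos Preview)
Your proposal is correct and follows essentially the same route as the paper: apply $\Delta$ via the twisted derivation rule, use (\ref{crucial}) to rewrite everything in terms of the $\sigma(w_i)$, and read off the recursion $f_{i+1}=\Delta(f_i)$, $\Delta(f_n)=0$. The only cosmetic difference is that you invoke the invertibility of $\sigma$ to conclude $\{\sigma(w_i)\}$ is a basis, whereas the paper passes back to the $w_i$ via the triangular relation $\sigma(w_i)=w_i+(\text{lower terms})$; your version is arguably cleaner.
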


\begin{proof} Assume $\phi \in \kk[V,W]^G$. Then we have
\begin{align*} 0 =& \Delta\left(\sum_{i=1}^n f_iw_i\right)\\	
				=& \sum_{i=1}^n \left(f_i\Delta(w_i)+\Delta(f_i)\sigma(w_i)\right)\\
				=&\sum_{i=1}^{n-1} (\Delta(f_i)-f_{i+1})\sigma(w_i) + \Delta(f_n)\sigma(w_n)
\end{align*}
where we used ($\ref{crucial}$) in the final step. Now note that $$\sigma(w_i)) = w_i + (\text{terms in}\ w_{i-1},w_{i-2}, \ldots, w_1)$$ for all $i=1, \ldots, n$. Thus, equating coefficients of $w_i$, for $i=n, \ldots, 1$ gives
\[\Delta(f_n)=0, \Delta(f_{n-1})=f_n, \ldots, \Delta(f_{2})=f_3, \Delta(f_1) = f_2.\]
Putting $f=f_1$ gives $f_i = \Delta^{i-1}(f)$ for all $i=1, \ldots, n$ and $0 = \Delta^n(f)$ as required.\\

Conversely, suppose that \[\phi = \sum_{i=1}^n \Delta^{i-1}(f) w_i\] for some $f \in \kk[V]$ with $\Delta^n(f)=0$. Then we have
\begin{align*} \Delta(\phi) &= \sum_{i=1}^n\Delta^{i-1} (f) \Delta(w_i) + \Delta^i(f) \sigma(w_i)\\
		&= \sum_{i=2}^n (-\Delta^{i-1} (f)  \sigma(w_{i-1}) + \Delta^i(f) \sigma(w_i)) + \Delta(f)\sigma(w_1) & \text{by (\ref{crucial})}\\
		&= \Delta^n(f) \sigma(w_n)\\
		&= 0 
\end{align*} as required.

\end{proof}

Note that the support of any covariant is therefore of the form $\{1,2, \ldots, i\}$ for some $i \leq n$. We will write
\[\supp(\phi) = i\] if $\phi$ is a covariant and $\supp(\phi) = \{1,2,\ldots, i\}$.

Introduce  notation
\[K_n:= \ker(\Delta^n)\]
and
\[I_n:= \im(\Delta^n).\]
These are $\kk[V]^G$-modules lying inside $\kk[V]$.

Now we can define a map

\[\Theta: K_n \rightarrow \kk[V,W]^G\]
\begin{equation}\label{theta} \Theta(f) = \sum_{i=1}^n \Delta^{i-1}(f)w_i.\end{equation}

Clearly $\Theta$ is an injective, degree-preserving map of $\kk[V]^G$-modules, and Proposition \ref{charcov} implies it is also surjective. We conclude that
\begin{Prop}\label{kniscov} $K_n$ and $\kk[V,W]^G$ are isomorphic as graded $\kk[V]^G$-modules.
\end{Prop}

From this point onwards we set $V=V_m$ and $W=V_n$, with the basis of $V$ chosen so that
\begin{align*} \sigma x_1 &= x_1+x_2,\\
\sigma x_2&=x_2+x_3,\\
\sigma x_3&= x_3+x_4.\\
\vdots\\
\sigma x_m &= x_m.\end{align*}

\begin{Lemma}\label{monomials} Let $z = x_1^{e_1}x_2^{e_2} \ldots x_m^{e_m}$. Let $d =  \sum_{i=1}^m e_i(m-i)$, $e = \sum_{i=1}^m e_i = \deg(z)$ and assume $d<p$. Then
$$\wt(z) = d+1.$$
\end{Lemma}

\begin{proof} Applying Proposition \ref{weightproduct} repeatedly and noting that $\wt(x_i) = m-i+1$, we find
\begin{align*}\wt(z) &= \sum_{i=1}^m\left(e_i (m-i+1) - e_i+1\right) - (n-1)\\
&= \sum_{i=1}^m (e_i(m-i))+1 = d+1.
\end{align*}
\end{proof}

\section{Hilbert series}\label{sec:hs}

Let $\kk$ be a field and let $S = \oplus_{i \geq 0} S_i$ be a positively graded $\kk$-vector space. The dimension of each graded component of $S$ is encoded in its Hilbert Series

\[H(S,t) = \sum_{i \geq 0} \dim(S_i) t^i. \]

Proposition \ref{kniscov} implies that $H(\kk[V,W]^G,t) = H(K_n,t)$. In this section we will outline a method for computing $H(K_n,t)$.

Each homogeneous component $\kk[V]_i$ of $\kk[V]$ is a $\kk G$-module. As such, each one decomposes as a direct sum of modules isomorphic to $V_k$ for some values of $k$. Write $\mu_k(\kk[V]_i)$ for the multiplicity of $V_k$ in $\kk[V]_i$ and define the series

\[H_k(\kk[V]) = \sum_{i \geq 0} \mu_k(\kk[V]_i) t^i.\]

The series $H_k(\kk[V_m])$ were studied by Hughes and Kemper in \cite{HughesKemper}. 
They can also be used to compute the Hilbert series of $\kk[V_m]^G$; since $\dim(V^G_k)=1$ for all $k=1, \ldots, p$ we have
\begin{equation}\label{hskvg} H(\kk[V_m]^G,t) =  \sum_{k=1}^p  H_k(\kk[V_m],t).\end{equation}

Now observe that 
\[\dim(\ker(\Delta^n|_{V_k})) = \left\{\begin{array}{lr} n & k \geq n\\
										    k & \text{otherwise.}	 \end{array}  \right.\]

Therefore
\[ H(K_n,t) = \sum_{k=1}^{n-1} k H_k(\kk[V],t) + \sum_{k=n}^p n H_k(\kk[V],t).\] 
We can write this as as a series not depending on $p$:
\begin{equation}\label{knseries} H(K_n,t) = nH(\kk[V]^G,t) - (\sum_{k=1}^{n-1} (n-k)H_k(\kk[V],t)).
\end{equation}
using equation (\ref{hskvg}).

We will need the Hilbert Series of $I^G_n = \kk[V]^G \cap I_n$ in the final section. For all $k=1, \ldots, p$ we have
\[\dim(\Delta^n(V_k))^G = \left\{\begin{array}{lr} 1 & k > n\\
										    0 & \text{otherwise.}	 \end{array}  \right.\]
Therefore \[ H(I_n^G,t) =  \sum_{k=n+1}^p H_k(\kk[V],t),\] which we can write independently of $p$ as
 \begin{equation}\label{ingseries} H(I^G_n,t) = H(\kk[V]^G,t) - (\sum_{k=1}^{n} H_k(\kk[V],t)).
\end{equation}

\section{Decomposition Theorems}\label{sec:decomp}

In this section we will  compute the series $H_k(\kk[V_2],t)$ and $H_k(\kk[V_3],t)$ for all $k=1, \ldots, p-1$. 

Hughes and Kemper \cite[Theorem~3.4]{HughesKemper} give the formula
\begin{equation}\label{hugheskemper}H_k(\kk[V_m],t) = \sum_{\gamma \in M_{2p}} \frac{\gamma-\gamma^{-1}}{2p}\gamma^{-k} \frac{1-\gamma^{p(m-1)}t^p}{1-t^p}\prod_{j=0}^{m-1}(1-\gamma^{m-1-2j}t)^{-1},\end{equation}

where $M_{2p}$ represents the set of $2p$th roots of unity in $\C$. A similar formula is given for $H_p(\kk[V],t)$ but we will not need this.
The following result can be derived from the formula above, but follows more easily from \cite[Proposition~3.4]{ElmerSymPowers}:

\begin{Lemma}\label{hkv2} $H_k(\kk[V_2,t]) = \frac{t^{k-1}}{1-t^p}$.
\end{Lemma}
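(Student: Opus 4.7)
The plan is to establish the $\kk G$-module decomposition
\[\kk[V_2]_d \;\cong\; q\, V_p \oplus V_{r+1}, \qquad d = qp + r, \;\; 0 \leq r \leq p-1,\]
from which the lemma is immediate: for $1 \leq k \leq p-1$ the summand $V_k$ arises in $\kk[V_2]_d$ exactly when $r = k-1$, with multiplicity one, so
\[H_k(\kk[V_2],t) \;=\; \sum_{q \geq 0} t^{qp+k-1} \;=\; \frac{t^{k-1}}{1-t^p}.\]

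For the decomposition I would induct on $d$, using the short exact sequence of $\kk G$-modules
\[0 \longrightarrow \kk[V_2]_{d-1} \longrightarrow \kk[V_2]_d \longrightarrow \kk \longrightarrow 0,\]
where the first map is multiplication by $x_2$ and the quotient, spanned by the class of $x_1^d$, is the trivial module because $\sigma(x_1^d) = (x_1+x_2)^d \equiv x_1^d \pmod{x_2}$. The base case $d = 0$ is clear. For the inductive step I would use that $\kk[V_2]^G = \kk[x_2, N(x_1)]$ is a polynomial ring (with $N(x_1) = \prod_{i=0}^{p-1}(x_1+ix_2)$ of degree $p$), so the number of Jordan blocks of $\sigma$ on $\kk[V_2]_d$ equals $\dim_{\kk}\kk[V_2]^G_d = \lfloor d/p \rfloor + 1$. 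If $r = 0$, induction gives $\kk[V_2]_{d-1} \cong q V_p$, and since $V_p$ is projective the sequence splits, yielding $\kk[V_2]_d \cong q V_p \oplus V_1$. If $r \geq 1$, induction gives $\kk[V_2]_{d-1} \cong qV_p \oplus V_r$; a split sequence would produce $q+2$ blocks in $\kk[V_2]_d$, contradicting the count $q+1$, so the extension is non-split. Its class lies nontrivially in $\Ext^1(\kk, V_r) \cong \kk$ (using $\Ext^1(\kk, V_p) = 0$), and any non-split extension of $\kk$ by $V_r$ is isomorphic to $V_{r+1}$, giving $\kk[V_2]_d \cong qV_p \oplus V_{r+1}$.

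I do not anticipate a significant obstacle. The only slightly subtle step is the last one, identifying a non-split extension of $\kk$ by $V_r$ (for $1 \leq r < p$) as $V_{r+1}$; this relies on $\kk G \cong \kk[t]/(t^p)$ being uniserial. A direct alternative that sidesteps $\Ext^1$ altogether is to exhibit the covariant $N(x_1)^{q}\, x_1^{r}$ in degree $d$: since $N(x_1)$ is invariant (hence commutes with $\Delta$) and $x_1^r$ spans $\kk[V_2]_r \cong V_{r+1}$, this element has weight $r+1$ and generates a submodule isomorphic to $V_{r+1}$; combined with the block count $q+1$ and the total dimension $d+1$, this pins down the decomposition as $qV_p \oplus V_{r+1}$.
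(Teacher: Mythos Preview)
Your main argument is correct. The paper itself gives no proof of this lemma: it simply remarks that the result can be derived from the Hughes--Kemper formula (\ref{hugheskemper}) or, more easily, from \cite[Proposition~3.4]{ElmerSymPowers}, and stops there. Your route is a self-contained version of the second option, establishing the decomposition $\kk[V_2]_d \cong qV_p \oplus V_{r+1}$ directly via the short exact sequence coming from multiplication by $x_2$, the block count from $\kk[V_2]^G = \kk[x_2,N(x_1)]$, and injectivity of $V_p$. By contrast, specialising (\ref{hugheskemper}) to $m=2$ would be a partial-fractions computation in the style of Lemma~\ref{hkv3}; you trade that manipulation for a short structural argument, which is arguably cleaner for $V_2$.

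One caveat on the alternative you sketch at the end: exhibiting an element of weight $r+1$, together with the block count $q+1$ and total dimension $qp+r+1$, does not by itself determine the Jordan type. For $p=5$, $q=1$, $r=2$, both $V_5 \oplus V_3$ and $V_4 \oplus V_4$ have two blocks, dimension $8$, and contain elements of weight $3$. To rule out the latter you still need to know that $qV_p$ embeds and splits off by injectivity, which is precisely the content of your main argument. So the alternative is not an independent shortcut, but your primary proof stands on its own.
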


For $V_3$ we will have to use Equation (\ref{hugheskemper}). This becomes
\[H_k(\kk[V_3],t) = \frac{1}{2p(1-t)} \sum_{\gamma \in M_{2p}} \frac{(\gamma-\gamma^{-1})\gamma^{-k+2}}{(1-\gamma^{2}t)(\gamma^2-t)}.\]

\begin{Lemma}\label{hkv3} $$H_k(\kk[V_3],t) = \left\{ \begin{array}{lr} \frac{t^{p-l}-t^{p-l-1}+t^{l+1}-t^{l}}{(1-t)(1-t^2)(1-t^p)} & \text{if}\ k = 2l+1\ \text{is odd} \\ 0 & \text{if}\ k\ \text{is even}. \end{array} \right.$$
\end{Lemma}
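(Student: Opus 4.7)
The plan is to evaluate the sum over $M_{2p}$ by two successive reductions: first collapsing to a sum over $p$-th roots of unity, then breaking the summand into elementary rational functions by partial fractions.

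First I would pair each $\gamma$ with $-\gamma$ in the sum. The denominator $(1-\gamma^{2}t)(\gamma^2-t)$ depends only on $\gamma^2$ and is therefore invariant under this involution, while the numerator $(\gamma-\gamma^{-1})\gamma^{-k+2}$ picks up a factor $(-1)^{k+1}$. When $k$ is even the two terms cancel, giving $H_k=0$ directly. When $k=2l+1$ is odd they reinforce, and the algebraic identity
\[(\gamma-\gamma^{-1})\gamma^{-k+2} \;=\; \gamma^{2(1-l)} - \gamma^{-2l} \;=\; \eta^{-l}(\eta-1)\] with $\eta=\gamma^2$ shows that the surviving summand depends only on $\eta$. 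Since $\gamma\mapsto\gamma^2$ is a $2$-to-$1$ surjection from $M_{2p}$ onto the set of $p$-th roots of unity, this reduces the original expression to
\[H_k(\kk[V_3],t) = \frac{1}{p(1-t)} \sum_{\eta^p=1} \frac{\eta^{-l}(\eta-1)}{(1-\eta t)(\eta-t)}.\]

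Next I would decompose the summand by partial fractions in $\eta$, obtaining
\[\frac{\eta-1}{(1-\eta t)(\eta-t)} = \frac{1}{1+t}\left(\frac{1}{1-\eta t}-\frac{1}{\eta-t}\right),\]
which reduces the task to evaluating $\sum_{\eta^p=1}\eta^{-l}/(1-\eta t)$ and $\sum_{\eta^p=1}\eta^{-l}/(\eta-t)$. Both are standard: expanding as geometric series in $t$ and applying the orthogonality relation that $\sum_{\eta^p=1}\eta^j$ equals $p$ when $p\mid j$ and vanishes otherwise, the first sum yields $p t^l/(1-t^p)$ and the second yields $p t^{p-l-1}/(1-t^p)$ for $0\leq l\leq p-1$. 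Combining these with the factor $1/(p(1-t)(1+t)) = 1/(p(1-t^2))$ gives the displayed closed form.

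The main obstacle is essentially clerical: tracking signs and exponents accurately through the $\eta=\gamma^2$ substitution, the partial fraction step, and the final rewriting so that the answer matches the precise form stated in the lemma. Beyond the $\gamma\mapsto-\gamma$ pairing trick that reduces $M_{2p}$ to $M_p$ and exposes the even/odd dichotomy, no new ingredient is required.
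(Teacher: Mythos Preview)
Your argument is correct and reaches the same closed form as the paper's own computation, namely $\dfrac{t^{l}-t^{p-l-1}}{(1-t^2)(1-t^p)}$ for odd $k=2l+1$. (A word of warning about the ``clerical'' final step: the displayed numerator in the Lemma contains a sign slip and should read $t^{p-l}-t^{p-l-1}+t^{l}-t^{l+1}$; the paper's own proof in fact produces this corrected version, which then simplifies to your expression after cancelling the common factor $1-t$.)

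The underlying strategy---partial fractions followed by orthogonality of roots of unity---is the same as the paper's, but your organisation is tidier. The paper works directly over $M_{2p}$, decomposes the summand into four partial-fraction pieces involving $t^{1/2}$, evaluates four separate character sums, and only at the very end sees the factor $1+(-1)^{-k+3}$ that distinguishes even from odd $k$. Your initial $\gamma\mapsto-\gamma$ pairing disposes of the even case at once, and the substitution $\eta=\gamma^{2}$ reduces everything to a sum over $M_{p}$ with just two partial-fraction terms and no half-integer powers of $t$. The gain is purely economy of bookkeeping; neither route requires an idea the other lacks.
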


\begin{proof} We evaluate  \[\frac{(\gamma-\gamma^{-1})\gamma^{-k+2}}{(1-\gamma^{2}t)(\gamma^2-t)} = \frac{A}{\gamma - t^{\frac12}} + \frac{B}{\gamma + t^{\frac12}} + \frac{C}{1-\gamma t^{\frac12}} + \frac{D}{1+\gamma t^{\frac12}}\] using partial fractions, finding
\[A = \frac{t^{-l+1}-t^{-l}}{(2t^{\frac12})(1-t^2)},\]
\[B = (-1)^{-k+3}\frac{t^{-l+1}-t^{-l}}{(-2t^{\frac12})(1-t^2)},\]
\[C = \frac{t^{l-1}-t^{l}}{2(t^{-1}-t)},\]
\[D = (-1)^{-k+3}\frac{t^{l-1}-t^{l}}{2(t^{-1}-t)}.\]

Now we compute:
\begin{align*}\sum_{\gamma \in M_{2p}}\frac{1}{\gamma-t^{\frac12}} &= \sum_{\gamma \in M_{2p}}\frac{-t^{-\frac12}}{1-\gamma t^{-\frac12}}\\
&= -t^{-\frac12}\sum_{i=0}^{\infty} \sum_{\gamma \in M_{2p}} (\gamma t^{-\frac12})^i\\
&= -t^{-\frac12} 2p \sum_{i=0}^{\infty} (t^{-\frac12})^{2pi}\\
&=  -t^{-\frac12} 2p \frac{1}{1-(t^{-\frac12})^{2p}}\\
&= -t^{\frac12} 2p \frac{1}{1-t^{-p}}\\
&= 2p\frac{t^{p-\frac12}}{1-t^p}
\end{align*}
 Similarly we have
\[\sum_{\gamma \in M_{2p}} \frac{1}{\gamma+t^{\frac12}} =  -2p\frac{t^{p-\frac12}}{1-t^p}\]
while
\begin{align*}\sum_{\gamma \in M_{2p}} \frac{1}{1-\gamma t^{\frac12}}
&= \sum_{i=0}^{\infty} \sum_{\gamma \in M_{2p}}(\gamma t^{\frac12})^i\\
&= 2p \sum_{i=0}^{\infty} ( t^{\frac12})^{2pi}\\
&= 2p \sum_{i=0}^{\infty} ( t^{pi})\\
&= 2p \frac{1}{1-t^p}
\end{align*}
and similarly 
\[\sum_{\gamma \in M_{2p}} \frac{1}{1+\gamma t^{\frac12}} = 2p \frac{1}{1-t^p}\] as $\{-\gamma: \gamma \in M_{2p}\} = M_{2p}$.

It follows that 
\begin{align*} H_k(\kk[V_3],t) &= \frac{1}{2p(1-t)} \left(\frac{(A-B)2pt^{p-\frac12}}{1-t^p} + \frac{2p(C+D)}{1-t^p}\right) \\ 
&= \frac{1}{(1-t)(1-t^p)}\left( \frac{(1+(-1)^{-k+3})(t^{p-l}-t^{p-l-1})}{2(1-t^2)} + \frac{(1+(-1)^{-k+3})(t^{l-1}-t^l)}{2(t^{-1}-t)}\right)\\
&= \left\{ \begin{array}{lr} \frac{t^{p-l}-t^{p-l-1}+t^{l+1}-t^{l}}{(1-t)(1-t^2)(1-t^p)} & \text{if}\ k \ \text{is odd} \\ 0 & \text{if}\ k\ \text{is even} \end{array} \right.
\end{align*}
as required.
\end{proof}

\section{Main results: $V_2$}\label{sec:v2}

We are now in a position to state our main results. First, suppose $V = V_2$ and $W = V_n$ where $n \leq p$. Then it's well known that $\kk[V]^G$ is a polynomial ring, generated by $x_2$ and $$N = \prod_{i =0}^{p-1} \sigma^i(x_1) =  x_1^p-x_1x_2^{p-1}.$$
Therefore we have

\begin{equation}\label{hsv2} H(\kk[V]^G,t) = \frac{1}{(1-t)(1-t^p)}.
\end{equation}

\begin{Prop}\label{hsv2vn} We have $$H(K_n,t) = H(\kk[V,W]^G,t) = \frac{1+t+t^2+ \ldots+t^{n-1}}{(1-t)(1-t^p)}.$$
\end{Prop}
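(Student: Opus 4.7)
The plan is to apply equation (\ref{knseries}) with the explicit formulas for $H(\kk[V]^G,t)$ and $H_k(\kk[V_2],t)$ that we already have in hand, and then simplify by an elementary power-series manipulation. Recall that $\Theta$ gives a degree-preserving bijection $K_n \to \kk[V]^G(W)$, so $H(\kk[V]^G(W),t)=H(K_n,t)$.

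Substituting Lemma \ref{hkv2} and equation (\ref{hsv2}) into (\ref{knseries}) yields
\begin{equation*}
H(K_n,t) \;=\; \frac{n}{(1-t)(1-t^p)} \;-\; \frac{1}{1-t^p}\sum_{k=1}^{n-1}(n-k)\,t^{k-1}.
\end{equation*}
So the whole task reduces to evaluating the polynomial $S(t):=\sum_{k=1}^{n-1}(n-k)t^{k-1}$ in a form compatible with the denominator $(1-t)(1-t^p)$.

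The key step is a short telescoping computation: multiplying $S(t)$ by $(1-t)$ and shifting the index in one of the resulting sums, the middle terms collapse, and one obtains
\begin{equation*}
(1-t)S(t) \;=\; n - (1 + t + t^2 + \cdots + t^{n-1}).
\end{equation*}
Substituting this back gives
\begin{equation*}
H(K_n,t) \;=\; \frac{n - (1-t)S(t)}{(1-t)(1-t^p)} \;=\; \frac{1+t+t^2+\cdots+t^{n-1}}{(1-t)(1-t^p)},
\end{equation*}
which is the claimed formula.

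There is no substantive obstacle: all of the ingredients are already proved, and the argument is just bookkeeping with a telescoping sum. The only thing to be careful about is the index shift (the sum $\sum(n-k)t^{k-1}$ starts at $k=1$ so runs over exponents $0,\ldots,n-2$), which determines exactly which monomials appear in the numerator.
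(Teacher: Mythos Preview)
Your proof is correct and follows exactly the same approach as the paper: both substitute equation (\ref{hsv2}) and Lemma \ref{hkv2} into equation (\ref{knseries}) and use the bijection $\Theta$ to identify $H(\kk[V]^G(W),t)$ with $H(K_n,t)$. The paper simply asserts the resulting simplification in one line, whereas you spell out the telescoping step $(1-t)S(t)=n-(1+t+\cdots+t^{n-1})$ explicitly.
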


\begin{proof} Using equations (\ref{knseries}) and (\ref{hsv2}) and Lemma \ref{hkv2} we have
\[H(K_n,t) = \frac{n}{(1-t)(1-t^p)} - \sum_{k=1}^{n-1} \frac{(n-k)t^{k-1}}{1-t^p} = \frac{1+t+t^2+ \ldots+t^{n-1}}{(1-t)(1-t^p)}.\] 
The result now follows from Proposition \ref{kniscov}.
\end{proof}

\begin{Theorem}\label{mainv2}
The module of covariants $\kk[V,W]^G$ is generated freely over $\kk[V]^G$ by 
\[\{\Theta(x_1^k): k = 0, \ldots, n-1\}.\]
where $\Theta(x_1^0) = \Theta(1) = w_1$.
\end{Theorem}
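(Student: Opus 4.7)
The plan is to combine a linear independence argument in the $w_1$-component with a Hilbert series comparison.

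First I would verify that each $\Theta(x_1^k)$ is actually a covariant. By Lemma \ref{monomials} applied with $m=2$, $e_1 = k$, $e_2 = 0$, we have $\wt(x_1^k) = k+1$, so $x_1^k \in K_n$ for $k = 0, 1, \ldots, n-1$, and Proposition \ref{charcov} (via the map $\Theta$) gives that $\Theta(x_1^k) \in \kk[V]^G(W)$. In particular $\Theta(x_1^0) = w_1$ and the covariant $\Theta(x_1^k)$ is homogeneous of degree $k$.

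Next I would prove that these $n$ covariants are linearly independent over $\kk[V]^G$. Suppose $\sum_{k=0}^{n-1} a_k \Theta(x_1^k) = 0$ for some $a_k \in \kk[V]^G$. By the definition of $\Theta$, the coefficient of $w_1$ in $\Theta(x_1^k)$ is precisely $x_1^k$, so reading off the $w_1$-component yields $\sum_{k=0}^{n-1} a_k x_1^k = 0$ in $\kk[V]$. The classical fact that $\kk[V_2]$ is a free $\kk[V_2]^G$-module with basis $1, x_1, x_1^2, \ldots, x_1^{p-1}$ (which can be checked directly, using $\kk[V]^G = \kk[x_2,N]$ and the fact that $x_1$ satisfies the monic polynomial $x_1^p - x_2^{p-1}x_1 - N = 0$ over $\kk[V]^G$) then forces every $a_k = 0$, since $n \leq p$.

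Finally I would compare Hilbert series. The free submodule $F := \bigoplus_{k=0}^{n-1} \kk[V]^G \cdot \Theta(x_1^k) \subseteq \kk[V]^G(W)$ has Hilbert series
\[
H(F,t) = \sum_{k=0}^{n-1} t^k \cdot H(\kk[V]^G,t) = \frac{1+t+t^2+\cdots+t^{n-1}}{(1-t)(1-t^p)},
\]
which equals $H(\kk[V]^G(W),t)$ by Proposition \ref{hsv2vn}. Since $F$ and $\kk[V]^G(W)$ are graded $\kk[V]^G$-modules with $F$ a submodule of the second and with matching Hilbert series, the quotient $\kk[V]^G(W)/F$ is zero in every degree, hence $F = \kk[V]^G(W)$.

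The main obstacle in this route is really the linear-independence step, which depends on knowing that $\kk[V_2]$ is free over $\kk[V_2]^G$ with basis $1, x_1, \ldots, x_1^{p-1}$; once this is in hand, everything else is a short bookkeeping exercise. Alternatively one could bypass freeness of $\kk[V_2]$ by noting that the $w_1$-components $1, x_1, \ldots, x_1^{n-1}$ have strictly increasing degrees in $x_1$ and using a leading-term argument modulo the relation $N = x_1^p - x_1 x_2^{p-1}$.
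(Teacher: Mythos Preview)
Your proof is correct and follows the same overall strategy as the paper: verify that each $\Theta(x_1^k)$ is a covariant, establish $\kk[V]^G$-linear independence, and then match Hilbert series using Proposition~\ref{hsv2vn}. The only difference is in the independence step. You project onto the $w_1$-component and invoke the (true, standard) fact that $1,x_1,\ldots,x_1^{p-1}$ form a $\kk[V_2]^G$-basis of $\kk[V_2]$. The paper instead observes that $\supp(\Theta(x_1^k))=k+1$ (again via Lemma~\ref{monomials}), so the family is triangular with respect to the $w_i$: in any relation $\sum_k a_k\Theta(x_1^k)=0$, the $w_n$-coefficient sees only $a_{n-1}\Delta^{n-1}(x_1^{n-1})=a_{n-1}\lambda x_2^{n-1}$, forcing $a_{n-1}=0$, and one peels off the rest by downward induction. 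The paper's route is slightly more self-contained (no appeal to the module structure of $\kk[V_2]$ over $\kk[V_2]^G$), while yours has the virtue that the $w_1$-projection $\Theta(f)\mapsto f$ is a clean $\kk[V]^G$-module map back to $K_n$; either argument closes the proof immediately.
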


Note that, by Proposition \ref{broerchuai}(i), $\kk[V,W]^G$ is free over $\kk[V]^G$ and we could use \cite[Theorem~3]{BroerChuaiRelative} to check our proposed module generators. However, we prefer a more direct approach.

\begin{proof} It follows from Lemma \ref{monomials} that $\wt(x_1^k) = k+1$. Therefore $\supp(\Theta(x_1^k)) = k+1$, and so it's clear that the $\kk[V]^G$-submodule $M$ of $\kk[V,W]^G$ generated by the proposed generating set is free. Moreover, as $\deg(\Theta(x_1^k)) = k$, $M$ has Hilbert series

\[\frac{1+t+t^2+ \ldots + t^{n-1}}{(1-t)(1-t^p)}.\] 

But by Proposition \ref{hsv2vn}, this is the Hilbert series of $\kk[V,W]^G$. Therefore $M = \kk[V,W]^G$ as required.
\end{proof}

\begin{Corollary}
$K_n$ is a free $\kk[V^G]$-module, generated by $\{x_1^k: k=0, \ldots, n-1\}$.
\end{Corollary}

\begin{proof}
Follows from Theorem \ref{mainv2} above and the proof of Proposition \ref{kniscov}.
\end{proof}

\begin{rem}
The above was also obtained, in the special case $n=p-1$, by Erku{\c s} and Madran \cite{ErkusMadran}.
\end{rem}

\section{Main results: $V_3$}\label{sec:v3}

In this section let $p$ be an odd prime and $V=V_3$. We begin by describing $\kk[V]^G$. This has been done in several places before, for example \cite{DicksonMadison} and \cite[Theorem~5.8]{Peskin}, but we include this for completeness.

We use a graded reverse lexicographic order on monomials $\kk[V]$ with $x_1>x_2>x_3$. If $f \in \kk[V]$ then the {\it lead term} of $f$ is the term with the largest monomial in our order and the {\it lead monomial} is the corresponding monomial. If $f,g \in \kk[V]$ we will write \[f>g\] if the lead monomial of $f$ is greater than the lead monomial of $g$.

The results of section \ref{sec:hs} can be used to show
\begin{equation}\label{hsv3} H(\kk[V]^G,t) = \frac{1+t^p}{(1-t)(1-t^2)(1-t^p)}.
\end{equation}

Note that using the given order, we have
\[f > \Delta(f)\] for all $f \in \kk[V]$.

We recall two popular means of constructing invariants. Let $f \in \kk[V]$. As mentioned in section \ref{sec:prelim}, the transfer
\[\Delta^{p-1}(f) = \Tr^G(f) = \sum_{i=0}^{p-1} (\sigma^i f) \]
and also the norm
\[N(f) = \prod_{i=0}^{p-1} (\sigma^i f) \] 
of $f$ both lie in $\kk[V]^G$. It is easily shown that
\begin{align*} a_1 &:= x_3,\\ a_2 &:= x_2^2-2x_1x_3-x_2x_3,\\ a_3 &:= N(x_1)=\prod_{i=0}^{p-1} \sigma^i(x_1) \end{align*}
are invariants, and looking at their lead terms tells us that they form a homogeneous system of parameters for $\kk[V]^G$, with degrees $1,2$ and $p$. 

\begin{Prop}\label{gensv3} Let $f \in \kk[V]^G$ be any invariant with lead term $x_2^p$. Let $A = \kk[a_1,a_2,a_3]$. Then $f \not \in A$. Consequently $\kk[V]^G$ is a free $A$-module, whose generators are $1$ and $f$.
\end{Prop}

\begin{proof} It is clear that $f \not \in A$, as its lead term is not in the subalgebra of $\kk[V]$ generated by the lead terms of $a_1,a_2$ and $a_3$. Therefore the $A$-submodule of $\kk[V]^G$ generated by $1$ and $f$ has Hilbert series
\[\frac{1+t^p}{(1-t)(1-t^2)(1-t^p)}\] which is the Hilbert series of $\kk[V]^G$ as required.
\end{proof}

The obvious choice of invariant with lead term $x_2^p$ is $N(x_2)$. However, we will use $\Tr^G(x_1^{p-1}x_2)$ instead. For the calculation of the lead term of this invariant see \cite[Lemma~3.1]{Shankv4v5} or Lemma \ref{monomialsx_1^ix_2} to come.

The following observation is a consequence of the generating set above.

\begin{Lemma}\label{obs} 
Let $f \in A$. Then the lead term of $f$ is of the form $x_1^{pi}x_2^{2j}x_3^k$ for some positive integers $i,j,k$.
\end{Lemma}

Now let $W = V_n$ for some $n \leq p$. For the rest of this section, we set $l = \frac12 n$ if $n$ is even, with $l = \frac12(n-1)$ if $n$ is odd. Our first task is to compute the Hilbert Series of $\kk[V,W]^G$. Once more we use equation (\ref{knseries}) and the bijection $\Theta$ to do this. We omit the details.

\begin{Prop}\label{hsv3vn}\
\[H(\kk[V,W]^G,t) = \frac{1+2t+2t^2+ \ldots +2t^l+2t^{p-l}+2t^{p-l+1}+\ldots + t^p}{(1-t)(1-t^2)(1-t^p)}\]
if $n$ is odd, while
\[H(\kk[V,W]^G,t) = \frac{1+2t+2t^2+ \ldots +2t^{l-1}+t^l+t^{p-l}+2t^{p-l+1}+\ldots + 2t^{p-1} + t^p}{(1-t)(1-t^2)(1-t^p)}\] if $n$ is even.
\end{Prop}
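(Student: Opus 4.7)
The plan is to compute $H(K_n,t) = H(\kk[V]^G(W),t)$ by substituting (\ref{hsv3}) and Lemma \ref{hkv3} into equation (\ref{knseries}) and then simplifying. Since $H_k(\kk[V_3],t) = 0$ for $k$ even, only the odd indices $k = 2l+1$ (for $0 \leq l \leq l_* - 1$) contribute to the sum, where $l_* = n/2$ when $n$ is even and $l_* = (n-1)/2$ when $n$ is odd. Setting $c_l = n - 2l - 1$ (so $c_l = 2(l_* - l)$ for $n$ odd and $c_l = 2(l_* - l) - 1$ for $n$ even), and putting everything over the common denominator $D = (1-t)(1-t^2)(1-t^p)$, the task reduces to verifying that the polynomial
\[ \mathcal{N}(t) = n(1+t^p) - \sum_{l=0}^{l_*-1} c_l\bigl(t^{p-l} - t^{p-l-1} + t^l - t^{l+1}\bigr) \]
equals the numerator claimed in the proposition.

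To evaluate $\mathcal{N}(t)$ I would split each summand into a low-degree piece $c_l(t^l - t^{l+1})$ and a high-degree piece $c_l(t^{p-l} - t^{p-l-1})$. Each is a telescoping sum; since $c_l$ is an arithmetic progression in $l$ of common difference $-2$, Abel summation collapses the low-degree piece to
\[ \sum_{l=0}^{l_*-1} c_l(t^l - t^{l+1}) = c_0 - 2(t + t^2 + \cdots + t^{l_*-1}) - c_{l_*-1}\,t^{l_*}, \]
supported on $\{0, 1, \ldots, l_*\}$. The high-degree piece is the image of the low-degree piece under the substitution $t^j \leftrightarrow t^{p-j}$, reflecting the manifest palindromic symmetry of the summand in Lemma \ref{hkv3}, and is therefore supported on $\{p-l_*, \ldots, p-1, p\}$. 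The hypothesis $n \leq p$ forces $l_* < p/2$, which guarantees these two supports are disjoint whenever $l_* \geq 1$.

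Adding in $n(1+t^p)$ and using the explicit values $c_0 = n-1$ and $c_{l_*-1} \in \{1,2\}$ according to the parity of $n$, the constant and $t^p$ coefficients of $\mathcal{N}(t)$ both collapse to $1$, the coefficients of $t^j$ and $t^{p-j}$ for $1 \leq j \leq l_*-1$ are each $2$, and the coefficients of $t^{l_*}$ and $t^{p-l_*}$ are each $2$ when $n$ is odd and $1$ when $n$ is even. This recovers exactly the numerator polynomial claimed in the proposition. The main obstacle is purely bookkeeping at the endpoint indices $j \in \{0, l_*, p-l_*, p\}$, where the answer depends on the parity of $n$ through $c_{l_*-1}$; the palindromic symmetry of $\mathcal{N}(t)$ roughly halves the amount of case-checking required, and the edge case $n = 1$ (where $l_* = 0$ and the sum is empty) is trivial.
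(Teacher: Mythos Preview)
Your proposal is correct and follows exactly the approach the paper indicates (``Once more we use equation (\ref{knseries}) and the bijection $\Theta$ to do this. We omit the details.''); you have supplied the omitted computation. One point worth flagging: the summand you use, $t^{p-l}-t^{p-l-1}+t^{l}-t^{l+1}$, has the last two terms swapped relative to the displayed statement of Lemma~\ref{hkv3}, but your version is the correct one---it is what the proof of Lemma~\ref{hkv3} actually produces (the second fraction there simplifies via $t^{-1}-t=t^{-1}(1-t^2)$ to $(t^{l}-t^{l+1})/(1-t^2)$), and it gives a nonnegative $H_1$, whereas the stated form would not.
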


Next, we need some information about the lead monomials of certain polynomials:

\begin{Lemma}\label{monomialsx_1^ismall} Let $j \leq k < p$. Then $\Delta^j(x_1^k)$ has lead term $$\frac{k!}{(k-j)!}x_1^{k-j}x_2^j.$$
\end{Lemma}

\begin{proof} The proof is by induction on $j$, the case $j=0$ being clear. Suppose $1 \leq j<k$ and
\[\Delta^j(x_1^k) = \frac{k!}{(k-j)!}x_1^{k-j}x_2^j + g\] where $g \in \kk[V]$ has lead monomial $\leq x_1^{k-j-1}x_2^{j+1}$.
Then
\begin{align*}
\Delta^{j+1}(x_1^k) &= \frac{k!}{(k-j)!} \Delta(x_1^{k-j}x_2^j) + \Delta(g) \\
		&= \frac{k!}{(k-j)!} \Delta(x_1^{k-j}) \sigma(x_2^j) + x_1^{k-j} \Delta(x_2^j) + \Delta(g). \end{align*}

Note that the lead monomial of $\Delta(g)$ is $< x_1^{k-j-1}x_2^{j+1}$. Now applying (\ref{powerrule}) shows that $\Delta(x_2^j)$ is divisible by $x_3$ and
\begin{align*} \Delta(x_1^{k-j}) &= x_2(x_1^{k-j-1} + x_1^{k-j-2} \sigma(x_1) + \ldots + \sigma(x_1)^{k-j-1})\\
&= (k-j) x_1^{k-j-1}x_2 + \ \text{smaller terms}. \end{align*}

In addition, $$\sigma(x_2^j) = (x_2+x_3)^j = x_2^j + \ \text{smaller terms}.$$

Therefore the lead term of $\Delta^{j+1}(x_1^k)$ is
\[(k-j) \frac{k!}{(k-j)!}x_1^{k-j-1}x_2^{j+1} = \frac{k!}{(k-j-1)!} x_1^{k-j-1} x_2^{j+1}\] as required.

\end{proof}

Similarly we have
\begin{Lemma}\label{monomialsx_1^ix_2} Let $j \leq k < p$. Then $\Delta^j(x_1^kx_2)$ has lead term $$\frac{k!}{(k-j)!}x_1^{k-j}x_2^{j+1}.$$
\end{Lemma}

\begin{proof}
We have by (\ref{Leibniz}) \[\Delta^j(x_1^kx_2) = \sum_{i=0}^j \begin{pmatrix} j \\ i \end{pmatrix} \Delta^{j-i}(x_1^k)\sigma^i(\Delta^i(x_2)).\] Only the first two terms are nonzero, hence
\begin{align*}\Delta^j(x_1^kx_2) &= \Delta^j(x_1^k)x_2 + j \Delta^{j-1}(x_1^k)x_3.\\
&= \frac{k!}{(k-j)!}x_1^{k-j}x_2^{j+1} + \ \text{smaller terms} \end{align*}
where we used Lemma \ref{monomialsx_1^ismall} is the last step.
\end{proof}

We are now ready to state our main results. Let $V=V_3$ and $W=V_n$. For any $i = 0,1, \ldots, n-1$ we define monomials
\[M_i =  \left\{ \begin{array}{lr} x_1^{i/2} & \text{if $i$ is even,}\\ x_1^{(i-1)/2}x_2 & \text{if $i$ is odd}.\end{array} \right.\]
and polynomials 
\[P_i =  \left\{ \begin{array}{lr} \Delta(x_1^{p-i/2}) & \text{if $i$ is even, $i>0$,}\\ x_1^{p-(i+1)/2} & \text{if $i$ is odd}.\end{array} \right.\] with $P_0 = x_1^{p-1}x_2$.

\begin{Theorem}
\label{kngens}
Let $n \leq p$. Then $K_n$ is a free $A$-module, generated by 
\[S_n = \{M_0,M_1, \ldots, M_{n-1}, \Delta^{p-n}(P_0), \Delta^{p-n}(P_1), \ldots, \Delta^{p-n}(P_{n-1})\}.\]
\end{Theorem}

\begin{proof}
By Lemma \ref{weightproduct}, the weight of $M_i$ is $i+1$ for $i<p$, while the weight of $P_i$ is
\[\left\{\begin{array}{lr} p & i\ \text{odd or zero} \\ p-1 & i\ \text{even, $i>0$.} \end{array} \right.\] Therefore the given polynomials all lie in $K_n$. Further, the degree of $M_i$ is $\lceil \frac{i}{2} \rceil$ and the degree of $P_i$ is $p-\lceil \frac{i}{2} \rceil$ which shows that the $A$-module generated by $S_n$ has Hilbert series bounded above by the Hilbert series of $K_n$ given in Proposition \ref{hsv3vn}, with equality if and only if it is free. Therefore it is enough to prove that $S_n$ is linearly independent over $A$.

Applying Lemmas \ref{monomialsx_1^ismall} and \ref{monomialsx_1^ix_2}, the lead monomials of $S_n$ are
\[\{1,x_2, x_1, x_1x_2, \ldots, x_1^{l-1}x_2, x_1^l,\]\[ x_1^{n-l-1}x_2^{p-n+1}, x_1^{n-l}x_2^{p-n}, \ldots, x_1^{n-2}x_2^{p-n+1}, x_1^{n-1}x_2^{p-n}, x_1^{n-1}x_2^{p-n+1}\}\] if $n$ is odd, and 
\[\{1,x_2, x_1, x_1x_2, \ldots, x_1^{l-2}x_2, x_1^{l-1}, x_1^{l-1}x_2,\]\[ x_1^{n-l}x_2^{p-n}, x_1^{n-l}x_2^{p-n+1}, x_1^{n-l+1}x_2^{p-n} \ldots, x_1^{n-2}x_2^{p-n+1}, x_1^{n-1}x_2^{p-n}, x_1^{n-1}x_2^{p-n+1}\}\] if $n$ is even.

In either case, we note that none of the claimed generators have lead term divisible by $x_3$, that each has $x_1$-degree $<p$, that there are at most two elements in $S_n$ with the same $x_1$-degree, and that when this happens these elements have $x_2$-degrees differing by 1. Combined with Lemma \ref{obs}, we see that for every possible choice of $f \in A$ and $g \in S_n$, the lead monomial of $fg$ is different. Therefore there cannot be any $A$-linear relations between the elements of $S_n$.   
\end{proof}

\begin{rem}
A generating set for $K_{p-1}$ over a different system of parameters can be found in \cite{ErkusMadran}.
\end{rem}

\begin{Corollary}\label{mainv3}
Let $n \leq p$. Then $\kk[V,W]^G$ is a Cohen-Macaulay module, generated over $A$ by 
 \[\{\Theta(M_0),\Theta(M_1), \ldots, \Theta(M_{n-1}), \Theta(P_0), \Theta(\Delta^{p-n}(P_1)), \ldots, \Theta(\Delta^{p-n}(P_{n-1}))\}.\]
\end{Corollary}

\begin{proof}
Follows from Theorem \ref{kngens} and the proof of Proposition \ref{kniscov}.
\end{proof}

\section{Application to transfers}

The transfer ideal $\Tr^G(\kk[V])$ is widely studied in invariant theory. In the notation of this article, we have $\Tr^G(\kk[V]) = I^G_{p-1} = I_{p-1}$. In this section, we use our work on covariants to give minimal $\kk[V]^G$-generating sets of the the ideals $I^G_{n-1}$ for each $n=1,2, \ldots, p$ when $V=V_2$, and minimal $A$-generating sets of the the ideals $I^G_{n-1}$ for each $n=1,2, \ldots, p$ when $V=V_3$. We retain the notation of sections \ref{sec:v2} and \ref{sec:v3}.

\begin{Theorem}
Let $V=V_2$ and $1 \leq n \leq p$. Then $I^G_{n-1}$ is a free $\kk[V]^G$-module, generated by $x_2^{n-1}$.
\end{Theorem}

\begin{proof} The same argument as in Lemma \ref{monomialsx_1^ismall} implies that $\Delta^{n-1}(x_1^{n-1}) = \lambda x_2^{n-1}$ for some nonzero constant $\lambda$, so $x_2^{n-1} \in I^G_{n-1}$. Using (\ref{ingseries}) we see that
\[H(I^G_{n-1},t) = \frac{t^{n-1}}{(1-t)(1-t^n)}.\] As this is the Hilbert series of the ideal $x_2^{n-1}\kk[V]^G$, the result follows.
\end{proof}

For $V=V_3$ we need to do a bit more work. We define a set of invariants
\[T_{n-1} = \{\Delta^{n-1}(M_{n-1})\} \cup \{ \Delta^{p-1}(P_i): i\ \text{odd or zero}, i < n\}.\]
Bearing in mind the weight of $M_{n-1}$ is $n$, and the weight of each $P_i$ above is $p$, it's clear that $T_{n-1} \subset I^G_{n-1}$. We claim that
\begin{Prop}
$T_{n-1}$ generates $I^G_{n-1}$ as an $A$-module.
\end{Prop}

\begin{proof}
Let $h \in I^G_{n-1}$. Then we can write $h = \Delta^{n-1}(f)$ for some $f \in \kk[V]^G$ with weight $n$, and by Proposition \ref{charcov} we have $\Theta(f) \in \kk[V,V_{n}]^G$. By Corollary \ref{mainv3} we can find elements $\alpha_0,\alpha_1, \ldots, \alpha_{n-1}, \beta_0, \beta_1, \ldots, \beta_{n-1} \in A$ such that
\[\Theta(f) = \sum_{i=0}^{n-1} \alpha_i \Theta(M_i) + \sum_{i=0}^{n-1} \beta_i \Theta(\Delta^{p-n}(P_i)).\]
Equating coefficients of $w_{n}$ in the above we obtain
\[h = \sum_{i=0}^{n-1} \alpha_i \Delta^{n-1}(M_i) + \sum_{i=0}^{n-1} \beta_i \Delta^{p-1}(P_i))\] but since $\Delta^{n-1}(M_i)=0$ for $i<n-1$ and $\Delta^{p-1}(P_i)=0$ when $i$ is even and $i>0$, we get $h \in AT_n$ as desired.
\end{proof}

$T_{n-1}$ does not generate $I^G_{n-1}$ freely over $A$. To see this, note that if $T_{n-1}$ were free over $A$, the resulting module would have Hilbert series

\[\frac{t^{l}+t^{p-l}+t^{p-l+1}+ \ldots+t^p}{(1-t)(1-t^2)(1-t^p)}.\]
But using (\ref{ingseries}) to calculate the Hilbert series of $I^G_n$ yields
\begin{equation}\label{hsingv3}
H(I^G_{n-1},t) = \frac{t^l+t^{p-l}}{(1-t)(1-t^2)(1-t^p)}
\end{equation}
which is strictly smaller. We claim, however, that $T_n$ is a minimal generating set. The first step in our argument requires more knowledge of certain lead monomials:

\begin{Lemma}\label{monomialsx_1^ilarge} Let $j \leq k$ with $j+k<p$. Then $\Delta^{k+j}(x_1^k)$ can be expressed as

$$2^{-j} (j+k)! \begin{pmatrix} k \\ j \end{pmatrix} x_2^{k-j}x_3^j + \mu_{j,k} x_1 x_2^{k-j-2} x_3^{j+1} + \ \text{smaller terms}$$ for some constant $\mu_{j,k} \in \kk$, where $\mu_{j,k} = 0$ if $j-k < 2$. In particular, the lead monomial of $\Delta^{k+j}(x_1^k)$ is $x_2^{k-j}x_3^j$.
\end{Lemma}

\begin{proof} For shorthand we write
$$\lambda_{j,k} = 2^{-j}(j+k)! \begin{pmatrix} k \\ j \end{pmatrix}.$$
We begin by showing, for all $0 < j \leq k$, that
\begin{equation}\label{lambdarelation} \lambda_{j,k+1} = (j+k+1)\lambda_{j,k} + \begin{pmatrix} j+k+1\\ 2 \end{pmatrix} \lambda_{j-1,k}.
\end{equation}

The author wishes to thank Fedor Petrov for pointing out this fact. To prove it, note that 
\begin{align*}
\begin{pmatrix} j+k+1 \\ 2 \end{pmatrix}\lambda_{j-1,k} + (j+k+1)\lambda_{j,k} \end{align*}
\begin{align*}
&= \frac{(j+k+1)(j+k)}{2} 2^{-j+1} (j+k-1)! \begin{pmatrix} k \\ j-1\end{pmatrix} + (j+k+1) 2^{-j} (j+k)! \begin{pmatrix} k \\ j \end{pmatrix} \\
&= 2^{-j}(j+k+1)!\left( \begin{pmatrix} k \\ j-1 \end{pmatrix} + \begin{pmatrix} k \\ j \end{pmatrix} \right)\\
&= 2^{-j}(j+k+1)! \begin{pmatrix} k+1 \\ j \end{pmatrix}\\
&= \lambda_{j,k+1} 
\end{align*}
as required.

The proof is by induction on $j$. First suppose $j=0$. We must show that
\begin{equation}\label{j=0} \Delta^k(x_1^k) = k!x_2^k + \mu_{0,k} x_1 x_2^{k-2} x_3 + \ \text{smaller terms}.\end{equation} We prove this by induction on $k$. The case $k=1$ is clear (with $\mu_{0,1} = 0$), so let $k \geq 1$. Then we have 

\begin{align*} \Delta^{k+1}(x_1^{k+1}) &= \Delta^{k+1} (x_1^k \cdot x_1) \\
 &= \sum_{i=0}^{k+1} \begin{pmatrix} k+1 \\ i \end{pmatrix} \Delta^{k+1-i}(x^k_1) \sigma^{i}( \Delta^{i}(x_1))\\
&= x_1 \Delta^{k+1}(x_1^k) + (k+1) (x_2+x_3) \Delta^k(x_1^k) + \begin{pmatrix} k+1 \\ 2 \end{pmatrix}x_3 \Delta^{k-1}(x_1^k).
\end{align*}

Now by Lemma \ref{monomialsx_1^ismall} we have
\[ \Delta^{k-1} (x_1^k) = k! x_1 x_2^{k-1} + f\] for some $f \in \kk[V]$ with lead monomial $\leq x_2^k$. By induction we have
\[\Delta^k(x_1^k) = k! x_2^k + \mu_{0,k} x_1x_2^{k-2} x_3 + \ \text{smaller terms} \] and
\[\Delta^{k+1}x_1^k = k! \Delta(x_2^k) + \mu_{0,k} x_3 \Delta(x_1x_2^{k-2}) + \text{smaller terms}.\] 
\[= k! x_3(x_2^{k-1}+x_2^{k-2}\sigma(x_2) + \ldots + \sigma(x_2)^{k-1}) + \mu_{0,k} x_3 (x_2 \sigma(x_2^{k-2}) + x_1 \Delta(x^{k-2})) +  \ \text{smaller terms} \]
\[= (k. k! + \mu_{0,k}) x_2^{k-1}x_3+ \ \text{smaller terms}. \]
 So, ignoring terms smaller than $x_1x_2^{k-1}x_3$ we have

\begin{align*} \Delta^{k+1}(x_1^{k+1}) &= (k.k!+\mu_{0,k}) x_1x_2^{k-1}x_3+(k+1)!x_2^{k+1}+ (k+1)\mu_{0,k}x_1x_2^{k-1}x_3+k!\begin{pmatrix} k+1 \\ 2 \end{pmatrix}x_1x_2^{k-1}x_3\\
&= (k+1)!x_2^{k+1} + (k!(k+ \begin{pmatrix} k+1 \\ 2 \end{pmatrix})+ (k+2) \mu_{0,k})x_1x_2^{k-1}x_3
\end{align*}
from which the claim (\ref{j=0}) follows.

Now suppose $j>0$. We proceed by induction on $k$. The initial case is $k=j$, so we must first show that
\[\Delta^{2k}(x_1^k) = 2^{-k}(2k)! x_3^k.\]
We prove this by induction on $k$. The result is clear when $k=1$. Suppose that $k\geq 1$, then we have  by (\ref{Leibniz})
\[\Delta^{2k+2}(x_1^{k+1}) = x_1 \Delta^{2k+2}(x^k_1) + (2k+2)(x_2+x_3)\Delta^{2k+1}(x_1^k) + \frac{(2k+2)(2k+1)}{2}x_3\Delta^{2k}(x_1^k).\]
But by Lemma \ref{monomials}, the weight of $x_1^k$ is $2k+1$, so the first two terms vanish. By induction we are left with
\[\Delta^{2k+2}(x_1^{k+1}) = \frac{(2k+2)(2k+1)}{2} x_3 \frac{(2k)!}{2^k}x_3^k = \frac{(2k+2)!}{2^{k+1}}x_3^{k+1}\] as required. 

 Now suppose $k \geq j$, then we have

\begin{align*} \Delta^{j+k+1}(x_1^{k+1}) &= \Delta^{j+k+1}(x_1^{k} \cdot x_1) \\
 &= \sum_{i=0}^{j+k+1} \begin{pmatrix} j+k+1 \\ i \end{pmatrix} \Delta^{j+k+1-i}(x^{k}_1) \sigma^{i}( \Delta^{i}(x_1))\\
&= x_1 \Delta^{j+k+1}(x_1^{k}) + (j+k+1) (x_2+x_3) \Delta^{j+k}(x_1^{k}) + \begin{pmatrix} j+k+1 \\ 2 \end{pmatrix}x_3 \Delta^{j-1+k}(x_1^{k}).
\end{align*}

Now by induction on $k$ we have we have
\[ \Delta^{j+k}(x_1^k) = \lambda_{j,k} x_2^{k-j}x_3^j + \mu_{j,k} x_1 x_2^{k-j-2} x_3^{j+1} + \ \text{smaller terms}.\]
So
\begin{align*} \Delta^{j+k+1}(x_1^k) &= \lambda_{j,k} x_3^j \Delta(x_2^{k-j}) + \mu_{j,k} x_3^{j+1} \Delta(x_1x_2^{k-j-2}) +  \ \text{smaller terms}\\\\
&= \lambda_{j,k}x_3^j(x_3)(x_2^{k-j-1}+x_2^{k-j-2}\sigma(x_2)+ \ldots + \sigma(x_2)^{k-j-1}) \\&
+ \mu_{j,k}x_3^{j+1} (x_2\sigma(x_2^{k-j-2}) + x_1\Delta(x_2^{k-j-2})) + \ \text{smaller terms}\\\\
&= (\lambda_{j,k}(k-j) + \mu_{j,k}) x_3^{j+1}x_2^{k-j-2} + \ \text{smaller terms}. \end{align*}
Also by induction on $j$ we have
\[\Delta^{j-1+k}(x_1^k) = \lambda_{j-1,k} x_2^{k-j+1}x_3^{j-1}+ \mu_{j-1,k} x_1 x_2^{k-j-1}x_3^j + \ \text{smaller terms}.\]

So, ignoring terms smaller than $x_1x_2^{k-j-1}x_3^{j+1}$ we have
\begin{align*} \Delta^{j+k+1}(x_1^{k+1}) &=   (\lambda_{j,k}(k-j) + \mu_{j,k})x_1 x_3^{j+1}x_2^{k-j-2}\\&
+ (j+k+1)(\lambda_{j,k} x_2^{k+1-j}x_3^j + \mu_{j,k} x_1 x_2^{k-j-1} x_3^{j+1})\\&
 + \begin{pmatrix} j+k+1 \\ 2 \end{pmatrix}(\lambda_{j-1,k} x_2^{k-j+1}x_3^{j}+ \mu_{j-1,k} x_1 x_2^{k-j-1}x_3^{j+1})\\\\
=&\left((j+k+1) \lambda_{j,k} + \begin{pmatrix} j+k+1 \\ 2 \end{pmatrix} \lambda_{j-1,k}\right) x_2^{k+1-j}x_3^j \\+& (\lambda_{j,k}(k-j)+ (j+k+2)\mu_{j,k}+ \begin{pmatrix} j+k+1 \\& 2 \end{pmatrix}\mu_{j-1,k})x_1x_2^{k-j-1}x_3^{j+1}\\\\
&= \lambda_{j,k+1} x_2^{k+1-j}x_3^j + \\&(\lambda_{j,k}(k-j)+ (j+k+2)\mu_{j,k}+ \begin{pmatrix} j+k+1 \\ 2 \end{pmatrix}\mu_{j-1,k})x_1x_2^{k-j-1}x_3^{j+1}
\end{align*}
where we used the observation at the beginning of the proof in the final step.

This completes the proof of the formula for $\Delta^{j+k}(x_1^k)$. Finally, note that $\lambda_{j,k} \neq 0$ modulo $p$ if $j+k<p$.
\end{proof}

We can use this result, along with Lemma \ref{monomialsx_1^ix_2} to determine the lead monomial of each element of $T_{n-1}$: we have

\begin{itemize}
    \item $LM(\Delta^{n-1}M_{n-1}) = x_3^l$;
    \item $LM(\Delta^{p-1}(P_0)) = x_2^p$;
    \item $LM(\Delta^{p-1}(P_i)) = x_2^{p-i}x_3^{(i-1)/2}$ when $i$ is odd.
\end{itemize}

In particular for each $i<n$ odd or $i=0$ we have that 
$$\Delta^{p-1}(P_i) \not \in A(\Delta^{n-1}(M_{n-1}),\Delta^{p-1}(P_{j}): j>i, \text{j odd}),$$ which is the the ideal generated by the elements of $T_{n-1}$ with degree smaller than the degree of $\Delta^{p-1}(P_i)$, since each of these had lead monomial divisible by a larger power of $x_3$ than $(i-1)/2$. This shows that $T_{n-1}$ is indeed a minimal generating set.

\bibliographystyle{plain}
\bibliography{MyBib}

\begin{thebibliography}{10}

\bibitem{BroerChuaiRelative}
Abraham Broer and Jianjun Chuai.
\newblock Modules of covariants in modular invariant theory.
\newblock {\em Proc. Lond. Math. Soc. (3)}, 100(3):705--735, 2010.

\bibitem{Chevalley}
Claude Chevalley.
\newblock Invariants of finite groups generated by reflections.
\newblock {\em Amer. J. Math.}, 77:778--782, 1955.

\bibitem{DicksonMadison}
Leonard~Eugene Dickson.
\newblock {\em On invariants and the theory of numbers}.
\newblock Dover Publications, Inc., New York, 1966.

\bibitem{ElmerSymPowers}
Jonathan Elmer.
\newblock Symmetric powers and modular invariants of elementary abelian
  {$p$}-groups.
\newblock {\em J. Algebra}, 492:157--184, 2017.

\bibitem{ErkusMadran}
Deniz~Erdemirci Erku\c{s} and U\u{g}ur Madran.
\newblock On generalized invariants of cyclic groups.
\newblock {\em J. Pure Appl. Algebra}, 219(6):2463--2470, 2015.

\bibitem{Hartmann}
J.~Hartmann.
\newblock Transvection free groups and invariants of polynomial tensor exterior
  algebras.
\newblock {\em Transform. Groups}, 6(2):157--164, 2001.

\bibitem{HartmannShepler}
Julia Hartmann and Anne~V. Shepler.
\newblock Reflection groups and differential forms.
\newblock {\em Math. Res. Lett.}, 14(6):955--971, 2007.

\bibitem{HochsterEagon}
M.~Hochster and John~A. Eagon.
\newblock Cohen-{M}acaulay rings, invariant theory, and the generic perfection
  of determinantal loci.
\newblock {\em Amer. J. Math.}, 93:1020--1058, 1971.

\bibitem{HughesKemper}
Ian Hughes and Gregor Kemper.
\newblock Symmetric powers of modular representations, {H}ilbert series and
  degree bounds.
\newblock {\em Comm. Algebra}, 28(4):2059--2088, 2000.

\bibitem{Peskin}
Barbara~R. Peskin.
\newblock Quotient-singularities and wild {$p$}-cyclic actions.
\newblock {\em J. Algebra}, 81(1):72--99, 1983.

\bibitem{Shankv4v5}
R.~James Shank.
\newblock S.{A}.{G}.{B}.{I}. bases for rings of formal modular seminvariants
  [semi-invariants].
\newblock {\em Comment. Math. Helv.}, 73(4):548--565, 1998.

\bibitem{SheppardTodd}
G.~C. Shephard and J.~A. Todd.
\newblock Finite unitary reflection groups.
\newblock {\em Canadian J. Math.}, 6:274--304, 1954.

\end{thebibliography}

\end{document}